\newtheorem{thm}{Theorem}[section]
\newtheorem{prop}[thm]{Proposition}
\newtheorem{lemma}[thm]{Lemma}
\theoremstyle{definition}
\newtheorem{defn}[thm]{Definition}
\newtheorem{exmp}[thm]{Example}
\theoremstyle{remark}
\newtheorem{rem}[thm]{Remark}
\let\c@equation\c@thm
\numberwithin{equation}{section}
\DeclareMathOperator{\der}{Der}
\DeclareMathOperator{\aut}{Aut}
\DeclareMathOperator{\en}{End}
\DeclareMathOperator{\lie}{Lie}
\DeclareMathOperator{\Pic}{Pic}
\DeclareMathOperator{\res}{Res}
\DeclareMathOperator{\spec}{Spec}
\DeclareMathOperator{\str}{STr}
\DeclareMathOperator{\tr}{Tr}
\DeclareMathOperator{\sexp}{sexp}
\newcommand{\CD}{\mathcal{D}}
\newcommand{\bA}{\mathbb{A}}
\newcommand{\C}{\mathbb{C}}
\newcommand{\R}{\mathbb{R}}
\newcommand{\Z}{\mathbb{Z}}
\newcommand{\A}{\mathcal{A}}
\newcommand{\CC}{\mathcal{C}}
\newcommand{\HH}{\mathcal{H}}
\newcommand{\CK}{\mathcal{K}}
\newcommand{\LL}{\mathcal{L}}
\newcommand{\M}{\mathcal{M}}
\newcommand{\OO}{\mathcal{O}}
\newcommand{\V}{\mathcal{V}}
\newcommand{\CF}{\mathcal{F}}
\newcommand{\g}{\mathfrak{g}}
\newcommand{\al}{\alpha}
\newcommand{\G}{\Gamma}
\newcommand{\Om}{\Omega}
\newcommand{\ov}{\overline}
\newcommand{\vac}{{\left|0\right>}}
\newcommand{\coord}{\text{Coord}}
\newcommand{\vir}{\text{Vir}}
\newcommand{\twobytwo}[4]
{\left(\begin{smallmatrix} #1 & #2 \\ #3 & #4 \end{smallmatrix}\right)}
\title{Superconformal Vertex Algebras and Jacobi Forms}
\author{Jethro van Ekeren}
\address{Departamento de Matem\'{a}tica Aplicada, \\ Universidade Federal Fluminense, \\ Rua M\'{a}rio Santos Braga, Niter\'{o}i, RJ 24020-140, Brazil}
\email{jethrovanekeren@gmail.com}
\begin{document}

\begin{abstract}
We discuss the appearance of Jacobi automorphic forms in the theory of superconformal vertex algebras, explaining it by way of supercurves and formal geometry. We touch on related topics such as Ramanujan's differential equations for Eisenstein series.
\end{abstract}

\maketitle

\section{Introduction}

This paper is about the link between automorphic forms and infinite dimensional algebras. It is primarily an exposition of joint work \cite{HVEaccepted} of the author with R. Heluani, which specifically relates certain automorphic forms on the group $SL_2(\Z) \ltimes \Z^2$ called Jacobi forms to vertex algebras equipped with an $N=2$ structure. Being of an expository nature, we have taken the opportunity to make some digressions; in particular to discuss an interpretation (Section \ref{Section.Ramanujan}) of Ramanujan's differential equations as an expression of the `Virasoro uniformisation' of the moduli space of elliptic curves with local coordinate.

The first instance of the aforementioned link was uncovered by Kac and Peterson \cite{KP84}, who used the Weyl-Kac character formula to express characters of integrable modules over affine Kac-Moody algebras in terms of theta functions. Another perspective was later adopted by Zhu \cite{Zhu96}, who proved that characters of suitable conformal vertex algebras are classical modular forms on the group $SL_2(\Z)$. Zhu proceeded by analysing $\CD$-modules associated with the vertex algebra on families of elliptic curves, establishing in particular a certain $SL_2(\Z)$-equivariance.

We study vertex algebras which admit the richer structure of $N=2$ superconformal symmetry. These give rise to $\CD$-modules on families of elliptic supercurves, and we show these $\CD$-modules to be equivariant under a certain $SL_2(\Z) \ltimes \Z^2$-action.

In referring to the $N=2$ superconformal symmetry algebra, whose origin lies in theoretical physics, we mean the Lie superalgebra $\widehat{W}^{1|1}$ with explicit basis and relations as given in Section \ref{Section.SUSY.VA}. To it there is an associated family $L(\widehat{W}^{1|1})_c$ of simple vertex algebras (see Example \ref{Example.SUSY}) depending on an auxiliary parameter $c$ called the central charge. For generic $c$ the representation theory of $L(\widehat{W}^{1|1})_c$ is rather complicated. However for $c(u) = 3 - 6/u$, where $u \in \Z_{\geq 2}$, it turns out that $L(\widehat{W}^{1|1})_{c(u)}$ has precisely $u(u-1)/2$ irreducible modules $L_u(j, k)$, parameterised by the set of pairs $(j, k) \in \Z^2$ where $j \geq 0$, $k \geq 1$ and $j+k < u$. There is an explicit formula for the graded superdimensions of these modules too \cite{Matsuo} \cite{KRW03}, viz.
\begin{align}\label{N2.char.explicit}
\str_{L_u(j, k)} q^{L_0} y^{J_0} = q^{\frac{jk}{u}} y^{\frac{j-k+1}{u}} P_{j, k}^{(u)} / P_{1/2, 1/2}^{(2)},
\end{align}
where
\begin{align*}
P_{j, k}^{(u)} = \prod_{n=1}^\infty \frac{(1-q^{u(n-1)+j+k})(1-q^{un-j-k})(1-q^{un})^2}{(1-q^{un-j}y)(1-q^{u(n-1)+j}y^{-1})(1-q^{un-k}y^{-1})(1-q^{u(n-1)+k}y)}.
\end{align*}




Now the normalised functions $y^{c(u)/6} \str_{L_u(j, k)} q^{L_0} y^{J_0}$ span a vector space which turns out to be invariant under an action of the group $SL_2(\Z) \ltimes \Z^2$ (specifically the weight $0$ index $c/6$ Jacobi action (\ref{Jacobi.action})). In other words the span of the normalised graded superdimensions is a vector valued Jacobi form. The question is to explain this fact conceptually.

In \cite{HVEaccepted} we showed that the picture outlined above relates to a general phenomenon: for any vertex algebra equipped with an `$N=2$ superconformal structure' (of which $L(\widehat{W}^{1|1})_c$ above is an example) the normalised graded superdimensions satisfy $SL_2(\Z) \ltimes \Z^2$-invariant differential equations. The key observations are as follows.
\begin{enumerate}
\item Jacobi forms are essentially sections of vector bundles on the moduli space of pairs $(E, \LL)$, where $E$ is an elliptic curve and $\LL$ a holomorphic line bundle on $E$.

\item Such pairs can be reinterpreted as certain special $1|1$-dimensional supercurves.

\item A vertex algebra $V$ equipped with a suitable $N=2$ superconformal structure `localises' nicely to give a $\CD$-module $\CC$ (known as `conformal blocks') on the moduli space of such supercurves.

\item If $V$ is well-behaved, the normalised graded superdimensions of $V$-modules (generalising the left of (\ref{N2.char.explicit})) converge in the analytic topology and yield horizontal sections of $\CC$.
\end{enumerate}
The issue of convergence is technical. In the appendix to \cite{HVEaccepted} we establish the convergence subject to the well known (to vertex algebraists) condition of $C_2$-cofiniteness. The proof involves analysis of the coefficients of the differential equations corresponding to $\CC$. The key points are to show that these coefficients lie in a certain ring of quasi-Jacobi forms, and to establish that this ring is Noetherian.

For careful statements of results and complete proofs we refer the reader to \cite{HVEaccepted}. 

\emph{Acknowledgements.} I would like to thank the organisers of the 2014 intensive period `Perspectives in Lie Theory' at CRM Ennio De Giorgi, where this work was presented, and CAPES-Brazil for financial support.

\section{Notation}

In addition to standard symbols such as $\C$, $\Z_+ = \{0, 1, 2, \ldots\}$, $\partial_z = \frac{\partial}{\partial z}$, etc., we shall use the following notation without further comment: $\OO = \C[[z]]$ the ring of formal power series in one variable, $\mathfrak{m} = z\C[[z]]$ its maximal ideal, and $\CK = \C((z))$ the ring of Laurent series. The supercommutative algebra $\OO^{1|1}$ is by definition $\OO \otimes \bigwedge[\theta]$, i.e., is obtained by adjoining to $\OO$ a single odd variable $\theta$ satisfying $\theta^2=0$. Similarly we have $\mathfrak{m}^{1|1} = \mathfrak{m} \otimes \bigwedge[\theta]$ and $\CK^{1|1} = \CK \otimes \bigwedge[\theta]$. The structure sheaf, tangent sheaf, cotangent sheaf, and sheaf of differential operators of a (super)scheme $X$ are denoted $\OO_X$, $\Theta_X$, $\Om_X$, and $\CD_X$, respectively.

\section{Superschemes and Elliptic Supercurves}\label{Section.Curves}

The picture to keep in mind of a complex supermanifold (of dimension $m|n$) is that of a space on which the Taylor expansion of a function in terms of local coordinates $z_1, \ldots, z_m$, $\theta_1, \ldots, \theta_n$ lies in the supercommutative ring $\C[[z_i]] \otimes \bigwedge[\theta_j]$. We refer the reader to {\cite{Manin.Gauge}} for background on superalgebra and supergeometry. A superscheme is formally defined {\cite[Chapter 4]{Manin.Gauge}} to be a topological space $X_\text{top}$ together with a sheaf $\OO_X$ of supercommutative local rings, such that the even part $(X_\text{top}, \OO_{X, 0})$ is a scheme. Morphisms are required to be $\Z/2\Z$-graded. The bulk $X_\text{rd}$ of a superscheme $X$ is the scheme $(X_\text{top}, \OO_X / \mathcal{J})$ where $\mathcal{J} = \OO_{X, 1} + \OO_{X, 1}^2$. A (complex) supercurve is a smooth superscheme over $\spec{\C}$ of dimension $1|n$. In this article we shall concern ourselves with $1|1$-dimensional complex supercurves, and we shall generally work in the analytic topology.

Let $X_0$ be a smooth curve and $\LL$ a holomorphic line bundle on $X_0$. We may construct a $1|1$-dimensional supercurve $X$ from this data by putting
\[
\OO_X = \bigwedge \LL[-1] = \OO_{X_0} \oplus \LL,
\]
with $\Z/2\Z$-grading induced by cohomological degree.

Any even family of $1|1$-dimensional complex supercurves is of the above form. Indeed, for a $1|1$-dimensional supercurve defined over a base superscheme $\spec{R}$, transformations between coordinate charts take the general form
\begin{align}\label{gen.11.xform}
\begin{split}
z' &= f_{11}(z) + f_{12}(z) \theta, \\
\theta' &= f_{21}(z) + f_{22}(z) \theta,
\end{split}
\end{align}
where $f_{11}$, $f_{22}$ are power series whose coefficients are even elements of $R$, and $f_{12}$, $f_{21}$ are power series whose coefficients are odd elements of $R$. If the base ring $R$ contains no odd elements then $f_{12}$ and $f_{21}$ vanish, (\ref{gen.11.xform}) is linear in $\theta$ and comprises the \v{C}ech cocycle description of a line bundle $\LL$, and the supercurve is consequently of the form $\bigwedge{\LL[-1]}$.

Recall the set $\Pic(X)$ of isomorphism classes of holomorphic line bundles on a smooth curve $X$, and its subset $\Pic_0(X)$ of line bundles of degree $0$. As is well known {\cite[Appendix B.5]{Hartshorne}} there is a natural bijection $\Pic(X) \cong H^1(X, \OO_X^*)$, and the exponential exact sequence
\[
0 \rightarrow \underline{\Z} \rightarrow \OO_X \rightarrow \OO_X^* \rightarrow 0
\]
yields the following morphisms in cohomology
\[
H^1(X, \underline{\Z}) \rightarrow H^1(X, \OO_X) \rightarrow H^1(X, \OO_X^*) \rightarrow H^2(X, \underline{Z}).
\]
The last map here assigns a line bundle its degree, and the kernel $\Pic_0(X)$ is identified with the quotient
\[
H^1(X, \OO_X) / H^1(X, \underline{\Z})
\]
which is a complex torus of dimension $g$, where $g$ is the genus of $X$.

An elliptic curve is a smooth complex curve of genus $1$, together with a marked point. We shall define an elliptic supercurve to be a supercurve $X$ of dimension $1|1$ whose bulk $X_\text{rd}$ has genus $1$, together with a marked point.

Let $\HH$ denote the complex upper half plane, and let $z$ be the standard coordinate on $\C$ which we fix once and for all. The trivial family $\HH \times \C \rightarrow \HH$ carries the action $(m, n) : (z, \tau) \mapsto (z+m\tau+n, \tau)$ of $\Z^2$ and the quotient together with marked point $z=0$ is a family of elliptic curves, which we denote $E \rightarrow \HH$. 

Quite generally {\cite[Appendix to \S 2]{Mumford.abelian.var}}, for $X$ a topological space with a free discontinuous action of a discrete group $G$, and $\CF$ a sheaf on the quotient space $X/G$ (and with $\pi : X \rightarrow X/G$ the quotient), there is a natural map
\[
H^\bullet(G, \G(X, \pi^* \CF)) \rightarrow H^\bullet(X/G, \CF),
\]
from group cohomology to sheaf cohomology. In case $X$ is a fibre $\C_\tau$ of the trivial family above, this map is an isomorphism. An element $\al \in \C$ defines a group $1$-cocycle $c_\al : \Z^2 \rightarrow \G(\C_\tau, \OO^*)$ by $(m, n) \mapsto e^{2\pi i m \al}$. We denote by $\LL_\al \in \Pic_0(E_\tau)$ the corresponding line bundle on $E_\tau$.

Let $S^\circ = \HH \times \C$. We denote by $E^\circ \rightarrow S^\circ$ the family whose fibre over $(\tau, \al)$ is the elliptic supercurve corresponding to $(E_\tau, \LL_\al)$. The group $SL_2(\Z)$ acts on $E \rightarrow \HH$ in such a way as to identify fibres isomorphic as elliptic curves. We now have the following $1|1$-dimensional analogue.
\begin{prop}\label{Jacobi.action.on.family}
The formulas
\begin{align*}
A : (t, \zeta, \tau, \al) &\mapsto \left( \frac{t}{c\tau+d}, e^{-2\pi i t \frac{c\al}{c\tau+d}}\zeta, \frac{a\tau+b}{c\tau+d}, \frac{\al}{c\tau+d} \right) \\
(m, n) : (t, \zeta, \tau, \al) &\mapsto (t, e^{2\pi i m t} \zeta, \tau, \al+m\tau+n),
\end{align*}
where $A \in SL_2(\Z)$ and $m, n \in \Z$, extend to a left action on $E^\circ \rightarrow S^\circ$ of the semidirect product group
\[
SL_2(\Z) \ltimes \Z^2 \quad \text{where} \quad (A, x) \cdot (A', x') = (AA', xA' + x').
\]
The restriction of the action of $g \in SL_2(\Z) \ltimes \Z^2$ to the fibre $E_{(\tau, \al)}$ is an isomorphism $E_{(\tau, \al)} \cong E_{g \cdot (\tau, \al)}$ of supercurves.
\end{prop}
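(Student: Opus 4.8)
The plan is to realise $E^\circ$ concretely as a quotient, and then to check that the formulas descend to it. Put $\widetilde E := \C^{1|1}\times\HH\times\C$, with coordinates $(t,\zeta,\tau,\al)$, and let $\Lambda\cong\Z^2$ act on $\widetilde E$ by the ``translation'' automorphisms $T_{(p,q)}\colon(t,\zeta,\tau,\al)\mapsto(t+p\tau+q,\ e^{2\pi i p\al}\zeta,\ \tau,\ \al)$. This action is free and properly discontinuous, and $E^\circ=\widetilde E/\Lambda$: the fibre over $(\tau,\al)$ is $\C^{1|1}$ modulo the lattice $\Z\tau+\Z$ acting on $t$ by translations and on $\zeta$ through the cocycle $c_\al$, i.e.\ the supercurve $\bigwedge\LL_\al[-1]$ over $E_\tau$, marked at $t=\zeta=0$. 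Writing $\rho(g)$ for the transformation of $\widetilde E$ attached to $g$ by the displayed formulas, each $\rho(g)$ is a morphism of superschemes (holomorphic in the even coordinates, and odd and linear in $\zeta$), so what remains is to show (i) that $g\mapsto\rho(g)$ is a left action of $SL_2(\Z)\ltimes\Z^2$ on $\widetilde E$, and (ii) that this action normalises $\Lambda$ and hence descends to $E^\circ$.

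For (i) I would use that $SL_2(\Z)\ltimes\Z^2$ is generated by its two standard subgroups subject only to their internal relations together with the crossing relation $A\,(m,n)\,A^{-1}=(m,n)A^{-1}$ (row vector times matrix). Three verifications then suffice. First, $\rho$ restricted to $SL_2(\Z)$ is a homomorphism: on $(t,\tau,\al)$ this is the standard computation with the automorphy factor $c\tau+d$, while on $\zeta$ the product of the two exponential factors picked up under $A'$ then $A$ collapses to the single factor for $AA'$ precisely because $\det A'=1$. Second, $\rho$ restricted to $\Z^2$ is a homomorphism: immediate, since $t$ is untouched and the factors $e^{2\pi i m t}$ add in the exponent. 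Third, $\rho(A)\,\rho((m,n))\,\rho(A)^{-1}$ is computed to equal $\rho((m',n'))$ with $(m',n')=(m,n)A^{-1}$; the identities $m'a+n'c=m$ and $m'b+n'd=n$ needed to recover the $\al$-translation are once again nothing but $\det A=1$.

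For (ii) I would compute the conjugates of the generators of $\Lambda$. One finds $\rho((m,n))\,T_{(p,q)}\,\rho((m,n))^{-1}=T_{(p,q)}$ — the residual exponent $2\pi i(mq-pn)$ is an integer multiple of $2\pi i$ — and $\rho(A)\,T_{(p,q)}\,\rho(A)^{-1}=T_{(p,q)A^{-1}}$, where $ad-bc=1$ is what reduces the $\zeta$-exponent to $2\pi i(pd-qc)\al$. Hence $\rho$ normalises $\Lambda$, permuting its generators via the $A$-action, and therefore descends to an action on $E^\circ=\widetilde E/\Lambda$. Each $\rho(g)$ then covers the transformation of the base given by $(\tau,\al)\mapsto(\tfrac{a\tau+b}{c\tau+d},\tfrac{\al}{c\tau+d})$ and $(\tau,\al)\mapsto(\tau,\al+m\tau+n)$ (its $SL_2(\Z)$-part being the action on $E\to\HH$ recalled above), and, being an automorphism of $E^\circ$, it restricts on each fibre to an isomorphism of superschemes $E_{(\tau,\al)}\to E_{g\cdot(\tau,\al)}$ carrying marked point to marked point. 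That the target is genuinely $E_{g\cdot(\tau,\al)}$ — equivalently, that $\LL_{\al/(c\tau+d)}$ on $E_{A\tau}$ pulls back along $z\mapsto z/(c\tau+d)$ to $\LL_\al$ on $E_\tau$, and that $\LL_{\al+m\tau+n}\cong\LL_\al$ on $E_\tau$ — is precisely the content of the $\zeta$-components: the factors $e^{-2\pi i t c\al/(c\tau+d)}$ and $e^{2\pi i m t}$ are the coboundaries identifying the relevant $1$-cocycles.

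The only real obstacle is bookkeeping. Because the $\zeta$-components combine multiplicatively with the M\"obius transformations of $(\tau,\al)$, the identities in (i) and (ii) require care about which point each exponential factor is evaluated at, and each comes down to a cancellation forced by $\det=1$ (for the $SL_2(\Z)$ pieces) or by integrality (for the $\Z^2$ piece). There is no conceptual difficulty; one could shorten the argument by first recording the composite formula for a general element $(A,(m,n))$ and then checking a single composition.
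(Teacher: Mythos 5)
The paper states this proposition without proof (it is expository and defers to \cite{HVEaccepted}), so there is nothing internal to compare against; judged on its own, your argument is correct and is the natural one. Your realisation of $E^\circ$ as $(\C^{1|1}\times\HH\times\C)/\Lambda$ with $T_{(p,q)}\colon(t,\zeta,\tau,\al)\mapsto(t+p\tau+q,\,e^{2\pi i p\al}\zeta,\tau,\al)$ is exactly consistent with the paper's description of the fibres via the cocycle $c_\al$ and with the map $\sexp$ to $E^\bullet$, and the three cancellations you isolate do come out as claimed: the composite $\zeta$-exponent for $\rho(A)\rho(A')$ differs from that of $\rho(AA')$ by $c(1-\det A')$, the crossing relation reduces to $m'a+n'c=m$, $m'b+n'd=n$, and the conjugates of $T_{(p,q)}$ produce the residual factors $e^{2\pi i(mq-pn)}=1$ and $T_{(p,q)A^{-1}}$ respectively. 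The only point worth making explicit in a final write-up is the presentation of $SL_2(\Z)\ltimes\Z^2$ as generated by its two standard subgroups modulo the crossing relation, which justifies that your three checks suffice.
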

Every elliptic supercurve associated to an elliptic curve $E$ and its degree $0$ line bundle $\LL$, appears as a fibre of $E^\circ \rightarrow S^\circ$. However $E^\circ \rightarrow S^\circ$ is not a universal family in the sense that it does not `see' families over odd base schemes.

We denote by $\mathbb{A}^{1|1}$ the superscheme whose set of $R$-points is $\spec{R[z, \theta]}$. In fact we distort convention a little by fixing a choice $z, \theta$ of coordinates, in particular our $\mathbb{A}^{1|1}$ has a distinguished origin, and we denote by $(\mathbb{A}^{1|1})^\times$ the subscheme with this origin removed. We then have the algebraic supergroup $GL(1|1)$ of linear automorphisms acting on $(\bA^{1|1})^\times$. The trivial family $(\bA^{1|1})^\times \times GL(1|1) \rightarrow GL(1|1)$ carries the action $n : (x, \mathbf{q}) \mapsto (\mathbf{q}^n x, \mathbf{q})$ of $\Z$. We restrict to the subscheme $S^\bullet \subset GL(1|1)$ consisting of automorphisms with nonzero even reduction, then the quotient by $\Z$ is a family $E^\bullet \rightarrow S^\bullet$ of elliptic supercurves. The distinguished point is $(z, \theta) = (1, 0)$.

We introduce the morphism $\sexp : E^\circ \rightarrow E^\bullet(\C)$ of $\C$-schemes defined by
\begin{align}\label{sex.defined}
(t, \zeta, \tau, \al) \mapsto \left( e^{2\pi i t}, e^{2\pi i t} \zeta, \twobytwo{q}{0}{0}{qy} \right).
\end{align}
The notation $q = e^{2\pi i \tau}$, $y = e^{2\pi i \al}$ used here will be in force throughout the paper.

\begin{rem}
There is a quite distinct notion of supercurve, which we recall here for the sake of avoiding confusion. A $\text{SUSY}_n$ curve {\cite[Chapter 2, Definition 1.10]{Manin.noncomm}} consists of a $1|n$-dimensional supercurve $X$ together with the extra data of a rank $0|n$ subbundle $T \subset \Theta_X$ such that the alternating form
\[
\varphi : T \otimes T \xrightarrow{[\cdot, \cdot]} \Theta_X \longrightarrow \Theta_X / T
\]
is nondegenerate and split. There is a forgetful functor from the category of $\text{SUSY}_n$ curves to that of $1|n$-dimensional supercurves. On the other hand, there turns out to be a nontrivial \emph{equivalence} (due to Deligne {\cite[pp. 47]{Manin.noncomm}}) between the category of all $1|1$-dimensional supercurves, and the category of `orientable' $\text{SUSY}_2$ curves. We describe the correspondence briefly.

Let $(X, T)$ be a $\text{SUSY}_2$ curve. Locally there is a splitting of $T$ as a direct sum of rank $0|1$-subbundles, each isotropic with respect to $\varphi$. If this can be extended to a global splitting, then we say $(X, T)$ is orientable. Suppose this is the case, and let $T_1 \subset T$ be an isotropic subbundle. Set $\ov{X}$ to be the superscheme $(X_\text{top}, \OO_X / T_1 \cdot \OO_X$). Then $\ov{X}$ is a $1|1$-dimensional supermanifold, and $X$ can be recovered uniquely from $\ov{X}$.

Much of the theory discussed below extends straightforwardly to $1|n$-dimensional supercurves, and to $\text{SUSY}_n$ curves.
\end{rem}

\section{The Bundle of Coordinates}

In this section and the two subsequent ones we outline the basics of `formal geometry'. This theory, which goes back to \cite{GeK}, provides a bridge between representation theory of infinite dimensional algebras and geometry of algebraic varieties. The book \cite{FBZ} contains a good introduction for the case of curves. We focus on the case of $1|1$-dimensional supercurves.

The basic object of formal geometry is the `set of all coordinates' on a variety $X$, denoted here by $\coord_X$. It may be defined precisely either as the subscheme of the jet scheme \cite{EM05} consisting of jets with nonzero differential, or as the fibre bundle with fibre at $x \in X$ the set of choices of generator of $\mathfrak{m}_x$ (where $\mathfrak{m}_x$ is the unique maximal ideal of the local ring $\OO_x$ at $x$).

For the case of $X$ a supercurve of dimension $1|1$ we have the noncanonical isomorphism $\OO_x \cong \OO^{1|1}$ at each point $x \in X$. Each fibre therefore carries a simply transitive action of the supergroup $\aut{\OO^{1|1}}$ by changes of coordinates, in other words $\coord_X$ is a principal $\aut{\OO^{1|1}}$-bundle. This supergroup consists of transformations
\begin{align*}
z &\mapsto a_{0, 1} \theta + a_{1, 0} z + a_{1, 1} z \theta + a_{2, 0} z^2 + a_{2, 1} z^2 \theta + \cdots, \\
\theta &\mapsto b_{0, 1} \theta + b_{1, 0} z + b_{1, 1} z \theta + b_{2, 0} z^2 + b_{2, 1} z^2 \theta + \cdots
\end{align*}
where $\twobytwo{a_{0, 1}}{a_{1, 0}}{b_{0, 1}}{b_{1, 0}} \in GL(1|1)$. As such the corresponding Lie superalgebra $\der_0{\OO^{1|1}}$ of derivations preserving $\mathfrak{m}^{1|1}$ has basis
\begin{equation}
\label{n2.plus.basis}
\begin{aligned}
L_n &= -z^{n+1}\partial_z - (n+1)z^n \theta \partial_\theta, &
J_n &= -z^n \theta \partial_\theta, \\
Q_n &= -z^{n+1} \partial_\theta, &
H_n &= z^n \theta \partial_\theta,
\end{aligned}
\end{equation}
where $n \in \Z_+$. The Lie bracket is the usual bracket of vector fields \cite{H07}.

\section{Superconformal Algebras and SUSY Vertex Algebras}\label{Section.SUSY.VA}

The Lie superalgebra $\der_0{\OO^{1|1}}$ embeds into $\der{\CK^{1|1}}$, and we obtain a basis of the latter by extending (\ref{n2.plus.basis}) to $n \in \Z$. This algebra admits a central extension
\[
0 \rightarrow \C C \rightarrow \widehat{W}^{1|1} \rightarrow \der \CK^{1|1} \rightarrow 0,
\]
which splits over $\der_0{\OO^{1|1}}$. Several distinct bases of $\widehat{W}^{1|1}$ appear in the literature. Relative to our choice the explicit relations are as follows {\cite[(2.5.1c)]{HK07}}.
\begin{align*}
[L_m, L_n] &= (m-n) L_{m+n}, &
[L_m, J_n] &= -n J_{m+n} + \delta_{m, -n} \frac{m^2+m}{6} C, & \\
[L_m, H_n] &= -nH_{m+n}, &
[L_m, Q_n] &= (m-n)Q_{m+n}, & \\
[J_m, J_n] &= \delta_{m, -n} \frac{m}{3} C, &
[J_m, Q_n] &= Q_{m+n}, & \\
[J_m, H_n] &= -H_{m+n}, &
[H_m, Q_n] &= L_{m+n} - m J_{m+n} + \delta_{m, -n} \frac{m^2-m}{6} C.
\end{align*}
\begin{rem}
In the $1|0$-dimensional setting we have the analogous Virasoro extension
\[
0 \rightarrow \C C \rightarrow \vir \rightarrow \der{\CK} \rightarrow 0,
\]
with relations
\[
[L_m, L_n] = (m-n) L_{m+n} + \delta_{m, -n} \frac{m^3-m}{12}C.
\]
The naive map $L_n \mapsto L_n$, $C \mapsto C$ is not an embedding of Lie algebras $\vir \hookrightarrow \widehat{W}^{1|1}$, but the map $L_n \mapsto L_n - \frac{1}{2}(n+1)J_n$, $C \mapsto C$ is.
\end{rem}

Though we will not be using vertex algebras until Section \ref{section.cb}, this is a convenient place to give their definition. To avoid clutter we present only the definition of `$N_W=1$ SUSY vertex algebra', which is the variant relevant for us. See \cite{KacVA} and {\cite{HK07}} for the general picture.
\begin{defn}\label{def.SUSY.va}
An $N_W=1$ SUSY vertex algebra is a vector superspace $V$, a vector $\vac \in V$, linear operators $S, T : V \rightarrow V$, and an even linear map $V \otimes V \rightarrow V \widehat{\otimes} \CK^{1|1}$ which is denoted
\[
a \otimes b \mapsto Y(a, Z)b = Y(a, z, \theta)b.
\]
These structures are to satisfy the following axioms.
\begin{enumerate}
\item $Y(\vac, Z) = \text{Id}_V$, and $Y(a, Z)\vac = a \bmod{(V \widehat{\otimes} \mathfrak{m}^{1|1})}$.

\item The series
\[
Y(a, Z) Y(b, W)c, \quad (-1)^{p(a)p(b)} Y(b, W) Y(a, Z)c, \quad \text{and} \quad Y(Y(a, Z-W)b, W)c
\]
are expansions of a single element of $V \widehat{\otimes} \CK^{1|1} \otimes_{\C[z, w]} \C[(z-w)^{-1}]$.

\item $[T, Y(a, Z)] = \partial_z Y(a, Z)$ and $[S, Y(a, Z)] = \partial_\theta Y(a, Z)$.
\end{enumerate}
\end{defn}
The notion of conformal structure (i.e., compatible $\vir$-action) on a vertex algebra permits connection with the geometry of algebraic curves via formal geometry {\cite[Chapter 6]{FBZ}}. Similarly important in the context of $1|1$-dimensional supercurves is the notion of superconformal structure on a SUSY vertex algebra \cite{H07}.
\begin{defn}[{\cite{HK07}}]\label{def.superconf}
A superconformal structure on the $N_W=1$ SUSY vertex algebra $V$ is a pair of vectors $j$ and $h$ (even and odd respectively) such that the following associations furnish $V$ with a $\widehat{W}^{1|1}$-module structure:
\begin{align*}
Y(j, Z) = J(z) - \theta Q(z), \quad Y(h, Z) = H(z) + \theta [L(z) + \partial_z J(z)],
\end{align*}
and
\begin{align*}
J(z) &= \sum_{n \in \Z} J_n z^{-n-1}, & Q(z) &= \sum_{n \in \Z} Q_n z^{-n-2}, \\
H(z) &= \sum_{n \in \Z} H_n z^{-n-1}, & L(z) &= \sum_{n \in \Z} L_n z^{-n-2}.
\end{align*}
It is further required that $T = L_{-1}$, $S = Q_{-1}$, and that $V$ be graded by finite dimensional eigenspaces of $L_0, J_0$, with integral eigenvalues bounded below. For the vector $b \in V$ satisfying $L_0b = \Delta b$, we write $o(b) \in \en{V}$ for the $z^{-\Delta}\theta$ coefficient of $Y(b, z, \theta)$.
\end{defn}
There is a natural notion of module over a SUSY vertex algebra. In the superconformal case we include in the definition $L_0$- and $J_0$-grading conditions analogous to those that appear in Definition \ref{def.superconf}.

\begin{exmp}\label{Example.SUSY}
Let $M(h, m, c)$ denote the Verma module $U(\widehat{W}^{1|1}) \otimes_{U(\widehat{W}^{1|1}_+)} \C{v}$, where the action on $v$ is by $C = c$, $L_0 = h$, $J_0 = m$, $Q_0 = 0$, and all positive modes act by $0$. Let $L(h, m, c)$ denote the unique irreducible quotient of $M(h, m, c)$. Then $M(0, 0, c)$ and $L(\widehat{W}^{1|1})_c = L(0, 0, c)$ have unique superconformal vertex algebra structures such that $v = \vac$, $j = J_{-1}v$, $h = H_{-1}v$.
\end{exmp}

\section{Harish-Chandra Localisation}\label{section.HC}

Let $K$ be a Lie group, $Z$ a principal $K$-bundle on a smooth manifold $S$, and $V$ a left $K$-module. The familiar associated bundle construction produces a vector bundle $\V = Z \times_K V$ on $S$ (recall by definition $Z \times_K V$ is $Z \times V$ modulo the relation $(zg, v) = (z, gv)$). If $\dim S = n$ then $S$ carries a canonical $GL(n)$-bundle, namely the frame bundle, whose fibre at $s \in S$ if the set of all bases of the tangent space $T_sS$. Associated with the defining $GL(n)$-module $\R^n$ is the tangent bundle $\Theta_S$, and with its dual $(\R^n)^*$ the cotangent bundle $\Om_S$.

The functor of Harish-Chandra localisation extends the associated bundle construction, enabling the construction of vector bundles with connection (more properly $\CD$-modules) from $K$-modules with the action of an additional Lie algebra. See {\cite[Chapter 17]{FBZ}} and {\cite[Section 1.2]{BD.Hitchin}} for the general theory.
\begin{defn}
A Harish-Chandra pair $(\g, K)$ consists of a Lie algebra $\g$, a Lie group $K$, an action $\text{Ad}$ of $K$ on $\g$, and a Lie algebra embedding $\lie{K} \hookrightarrow \g$ compatible with $\text{Ad}$. A $(\g, K)$-module is a vector space with compatible left $\g$- and $K$-module structures. A $(\g, K)$-structure on a space $S$ is a principal $K$-bundle $Z \rightarrow S$ together with a transitive action $\g \rightarrow \Theta_Z$ satisfying certain compatibilities.
\end{defn}
Let $Z \rightarrow S$ be a $(\g, K)$-structure, and $V$ a $(\g, K)$-module. The fibre $\V_s$ of the associated bundle $\V = Z \times_K V$ over the point $s \in S$ carries an action of the Lie algebra $\g_s = Z_s \times_K \g$. Inside $\g_s$ we have the pointwise stabiliser $\g_s^0$ of $Z_s$. We denote by $\Delta(V)$ the sheaf whose fibre over $s$ is the space of coinvariants $\V_s / \g_s^0 \cdot \V_s$. The $\g$-action on $V$ translates into a flat connection (more precisely a left $\CD_S$-module structure) on $\Delta(V)$.

Now let $\widehat{\g}$ be a central extension of $\g$ split over $\lie{K} \subset \g$. If $V$ is a $\widehat{\g}$-module then a variation on the construction above yields $\Delta(V)$ a twisted $\CD_S$-module. That is to say, there is a certain sheaf $\CF$ on $S$ (which depends on the central extension $\widehat{\g}$ of $\g$) such that $\Delta(V)$ is a $\CD_\CF$-module, where $\CD_\CF$ is the sheaf of differential operators on $\CF$.

The Harish-Chandra pairs of particular importance in our context are $(\vir, \aut{\OO})$ and $(\widehat{W}^{1|1}, \aut{\OO^{1|1}})$. Their relevance stems from the fact that moduli spaces of curves and $1|1$-dimensional supercurves carry natural $(\g, K)$-structures for these respective pairs {\cite{ADKP}} {\cite{BS88}} (see also {\cite[Chapter 17]{FBZ}} for an overview). This fact frequently goes by the name `Virasoro Uniformisation'.

Let $\pi : X \rightarrow S$ be a morphism of schemes in general. In the sequence
\[
0 \rightarrow \Theta_{X/S} \rightarrow \Theta_X \rightarrow \pi^* \Theta_S \rightarrow 0
\]
(which defines the relative tangent bundle $\Theta_{X/S}$), we denote by $\Theta_\pi$ the preimage of $\pi^{-1} \Theta_S$ in $\Theta_X$. Intuitively $\Theta_\pi$ consists of vector fields on $X$ of the shape $f(s) \partial_s + g(s, x) \partial_x$.

Let $\widehat{\M}$ denote the moduli space of triples $(X, x, t)$ consisting of a smooth algebraic curve $X$ (of genus $g \geq 1$), a point $x \in X$, and a local coordinate $t \in \coord_{X, x}$, and let $\M$ denote the moduli space of pairs $(X, x)$. Let $\pi : \widehat{X} \rightarrow \widehat{\M}$ be the universal curve, and $Y \subset \widehat{X}$ the section of points $(X, x, t; x)$.

The following theorem can be viewed as a refinement of the Kodaira-Spencer isomorphism.
\begin{thm}[{\cite[Lemma 4.1.1]{BS88}}]
There is a canonical $(\der \CK, \aut \OO)$-structure on $\widehat{\M} \rightarrow \M$. It is induced by the $\OO_{\widehat{\M}}$-module isomorphism
\[
\Theta_\pi(\widehat{X} \backslash Y) \rightarrow \OO_{\widehat{\M}} \otimes \der{\CK}
\]
which acts at $(X, x, t)$ by sending a vector field to the expansion at $x$ in powers of $t$ of its vertical component \textup{(}along $X$\textup{)}.
\end{thm}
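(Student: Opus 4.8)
The plan is to run the standard formal-geometry argument: exhibit $\widehat\M \to \M$ as an $\aut\OO$-torsor, realise the $\der\CK$-action by deforming curves in a formal neighbourhood of the marked point, and deduce transitivity from the affineness of $X \setminus x$. I would set it up so that the displayed isomorphism itself carries the construction, which keeps the Lie-algebra bookkeeping to a minimum.

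First I would verify that $\widehat\M \to \M$ is a principal $\aut\OO$-bundle: the fibre over $(X,x)$ is $\coord_{X,x}$, on which $\aut\OO$ acts simply transitively by reparametrising the formal coordinate. Its Lie algebra $\lie{\aut\OO}$ --- the derivations of $\OO$ preserving $\m$ --- then acts infinitesimally, and is identified thereby with the vertical vector fields of $\widehat\M \to \M$; this is the embedding $\lie{\aut\OO} \hookrightarrow \der\CK$ demanded of a Harish-Chandra pair, and, as will be seen, the compatibility of the $\der\CK$-action with the torsor structure. All of this must be phrased for the pro-scheme $\widehat\M$ --- the inverse limit of the moduli $\widehat\M_k$ of curves with a $k$-jet of coordinate --- and the pro-group $\aut\OO$, the inverse limit of the $\aut(\OO/\m^{k+1})$; this is routine but absorbs most of the technical effort.

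Next, the isomorphism. Along the section $Y$ the universal curve $\widehat X$ carries a tautological formal coordinate $t$, which identifies the formal neighbourhood of $Y$ with $\widehat\M \times \spec\OO$ and the punctured one with $\widehat\M \times \spec\CK$. For $v \in \Theta_\pi(\widehat X\setminus Y)$ I restrict $v$ there, extract its $\partial_t$-component, and expand it in $t$: this is the asserted map $\Theta_\pi(\widehat X\setminus Y) \to \OO_{\widehat\M}\otimes\der\CK$. That it is an isomorphism reduces, via the Mayer-Vietoris sequence of $\widehat X$ for the cover by $\widehat X\setminus Y$ and the formal disc around $Y$, to two vanishings: $R^1\pi_*\Theta_{\widehat X/\widehat\M}$ vanishes on $\widehat X\setminus Y$ because the fibres $X\setminus x$ are affine (surjectivity), and $H^0(X,\Theta_X(-nx)) = 0$ for $n\gg 0$ since $\deg\Theta_X<0$ on a curve of genus $\geq 1$ (injectivity). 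Post-composing the inverse with the canonical projection $\Theta_\pi(\widehat X\setminus Y)\to\Theta_{\widehat\M}$ --- which exists because $\Theta_\pi$ is by definition the preimage of $\pi^{-1}\Theta_{\widehat\M}$ and the fibres $X\setminus x$ are connected --- yields the map $\OO_{\widehat\M}\otimes\der\CK\to\Theta_{\widehat\M}$, hence, on the constant subspace $\der\CK$, the action $\der\CK\to\Theta_{\widehat\M}$.

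Finally, the Harish-Chandra axioms. That the action is a Lie-algebra homomorphism is because the bracket on $\Theta_\pi(\widehat X\setminus Y)$ is the commutator of vector fields, which the expansion map intertwines with the bracket of $\der\CK$ and which is compatible with the projection to $\Theta_{\widehat\M}$; $\aut\OO$-equivariance (for the adjoint action on $\der\CK$ and the torsor action on $\Theta_{\widehat\M}$), and the fact that $\lie{\aut\OO}\subset\der\CK$ --- the coordinate series with no pole --- acts by the infinitesimal torsor action, are both naturality of the tautological coordinate: concretely, a vector field regular along $Y$ extends over the disc and so merely reparametrises $t$ without touching $X$. The last axiom, transitivity, is exactly the surjectivity above together with that of $\Theta_\pi(\widehat X\setminus Y)\to\Theta_{\widehat\M}$, whose cokernel again lies in the vanishing $R^1\pi_*\Theta_{\widehat X/\widehat\M}(\widehat X\setminus Y)$; through the gluing picture --- excise the formal disc and reglue $X\setminus x$ to it along $\spec\CK$ by $\exp(\eps\eta)$ --- it is the statement $T_{(X,x,t)}\widehat\M \cong \der\CK/\Gamma(X\setminus x,\Theta_X)$, valid for all $g\geq 1$. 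I expect this transitivity to be the real obstacle: it is the Kodaira-Spencer-type assertion that every first-order deformation of $(X,x,t)$ can be localised at $x$, which rests squarely on $X\setminus x$ being affine, and it is the point where the inverse limits defining $\widehat\M$ and $\aut\OO$ have to be handled carefully enough that the cohomology vanishing survives passage to the limit. The remainder is the standard bookkeeping of \cite[Chapter 17]{FBZ}.
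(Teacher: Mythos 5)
The paper does not prove this theorem: it is quoted verbatim from \cite[Lemma 4.1.1]{BS88} (with \cite[Chapter 17]{FBZ} and \cite{ADKP} as the standard expositions), so there is no in-text argument to compare yours against. Your proposal reproduces the standard Virasoro-uniformisation proof from those references --- torsor structure on $\coord$, trivialisation of the formal neighbourhood of $Y$ by the tautological coordinate, surjectivity from affineness of $X \setminus x$, injectivity from the absence of vector fields vanishing to high order at $x$, and transitivity as the Kodaira--Spencer statement $T_{(X,x,t)}\widehat{\M} \cong \der\CK / \Gamma(X\setminus x, \Theta_X)$ --- and it is essentially correct. One small imprecision: for $g=1$ one has $\deg \Theta_X = 0$, not $\deg\Theta_X < 0$; the vanishing you actually need is $H^0(X, \Theta_X(-nx)) = 0$ for $n \geq 1$, which does hold since $\deg\Theta_X(-nx) = 2-2g-n < 0$.
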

The $1|1$-dimensional analogue (along with other cases) is {\cite[Theorem 6.1]{Vai95}}. It follows that any $(\vir, \aut{\OO})$-module gives rise to a twisted $\CD$-module on $\M$ (or on any family of smooth curves). Similarly any $(\widehat{W}^{1|1}, \aut{\OO^{1|1}})$-module gives rise to a twisted $\CD$-module on any family of smooth $1|1$-dimensional supercurves.

\section{Elliptic Curves and Ramanujan Differential Equations}\label{Section.Ramanujan}

It is instructive to flesh out the construction of the previous section a little in the case of elliptic curves. Let $E \rightarrow \HH$ be the family of elliptic curves introduced in Section \ref{Section.Curves}, and let $V$ be a $(\der{\CK}, \aut{\OO})$-module, so that we obtain a $\CD$-module $\Delta(V)$ on $\HH$.

We recall some standard functions from number theory \cite{Apostol}. The Bernoulli numbers $B_n$, $n \geq 0$ are defined by
\[
\frac{x}{e^x-1} = \sum_{n=1}^\infty B_n \frac{x^n}{n!}.
\]
The Eisenstein series $G_{2k}$, $k \geq 1$ are defined by
\[
G_{2k} = \frac{(-1)^{k+1}B_{2k}}{(2k)!} (2\pi)^{2k} E_{2k}, \quad \text{where} \quad E_{2k} = 1 - \frac{4k}{B_{2k}} \sum_{m=1}^\infty \frac{n^{2k-1} q^n}{1-q^n}.
\]
The Weierstrass elliptic function $\wp$, and quasielliptic function $\ov{\zeta}$, are defined by
\begin{align*}
\wp(z, \tau) &= z^{-2} + \sum_{k \in \Z_{>0}} (2k-1) z^{2k-2} G_{2k}
\quad \text{and} \quad
-2\pi i \ov{\zeta}(z, \tau) = z^{-1} - \sum_{k \in \Z_{>0}} z^{2k-1} G_{2k}.
\end{align*}
The Weierstrass function $\wp$ is elliptic, i.e.,
\[
\wp(z+1, \tau) = \wp(z+\tau, \tau) = \wp(z, \tau).
\]
The nonstandard normalisation of $\ov{\zeta}$ is chosen so that
\begin{align}\label{zeta.trans}
\ov{\zeta}(z+1, \tau) = \ov{\zeta}(z, \tau) \quad \text{and} \quad
\ov{\zeta}(z+\tau, \tau) = \ov{\zeta}(z, \tau)+1.
\end{align}
We have the following result.
\begin{lemma}\label{global.vec.field}
Flat sections $s$ of $\Delta(V)$ satisfy the differential equation
\[
\frac{\partial s}{\partial \tau} + \left(\res_t \ov{\zeta}(t, \tau) L(t) dt\right) \cdot s = 0.
\]
\end{lemma}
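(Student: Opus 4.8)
The plan is to make explicit the $(\der\CK,\aut\OO)$-structure on the family $E\to\HH$ near the marked point, and to identify the vector field on $\HH$ that encodes the $\tau$-derivative. Concretely, fix $\tau\in\HH$ and the local coordinate $t=z$ at the marked point $z=0$ of $E_\tau$. Moving $\tau$ in the base deforms the elliptic curve, and the Kodaira--Spencer/Virasoro-uniformisation statement (the cited {\cite[Lemma 4.1.1]{BS88}}, or rather its incarnation for $E\to\HH$) tells us that the flat connection on $\Delta(V)$ in the $\partial_\tau$-direction is given by acting with an element of $\der\CK$ obtained as follows: pick any vector field on the total space $E$ (away from the marked point) whose image in $\pi^*\Theta_\HH$ is $\partial_\tau$, restrict it to a formal neighbourhood of the section $Y$, and expand its vertical component in powers of $t$. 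Because the identification is as $\OO_{\widehat\M}$-modules, the resulting element of $\OO_\HH\otimes\der\CK$ is well-defined modulo $\der_+\OO=\lie{\aut\OO}$ — but the $\lie{\aut\OO}$-ambiguity is killed on coinvariants $\Delta(V)$, so the formula descends. Thus I must exhibit \emph{one} such global-over-the-fibre vector field.

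The key step is the construction of that vector field. On the trivial family $\HH\times\C\to\HH$, a lift of $\partial_\tau$ is simply $\partial_\tau$ itself, but this is not $\Z^2$-invariant: under $(m,n):(z,\tau)\mapsto(z+m\tau+n,\tau)$ one computes that $\partial_\tau$ picks up $-m\,\partial_z$. To correct this I add a multiple of $\partial_z$ with a coefficient $g(z,\tau)$ satisfying the cocycle condition $g(z+1,\tau)=g(z,\tau)$, $g(z+\tau,\tau)=g(z,\tau)+1$; then $\partial_\tau+g(z,\tau)\partial_z$ is genuinely $\Z^2$-invariant and hence descends to a vector field on $E_\tau$, holomorphic away from $z=0$, lifting $\partial_\tau$. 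Comparing with (\ref{zeta.trans}), the function $g(z,\tau)=\ov\zeta(z,\tau)$ does exactly this. So the descended vector field is $\partial_\tau+\ov\zeta(t,\tau)\partial_t$ in the coordinate $t$ at the marked point; its vertical component is $\ov\zeta(t,\tau)\partial_t$.

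Now I translate into the action on $\Delta(V)$. The $\der\CK$-action is via the modes $L_n$, with $-z^{n+1}\partial_z\mapsto L_n$, i.e.\ a vector field $-f(t)\partial_t\mapsto \sum_n f_n L_n$ where $f(t)=\sum f_n t^{n+1}$; in residue form, the image of $f(t)\partial_t$ is $-\res_t f(t)L(t)\,dt$ with $L(t)=\sum L_n t^{-n-2}$. Applied to $f(t)=\ov\zeta(t,\tau)$, the vertical field $\ov\zeta(t,\tau)\partial_t$ acts as $-\res_t\ov\zeta(t,\tau)L(t)\,dt$. Combined with the fact that the $\Delta(V)$-connection in the $\partial_\tau$-direction is $\partial_\tau$ plus this Lie-algebra action, a flat section $s$ satisfies
\[
\frac{\partial s}{\partial\tau}+\left(\res_t\ov\zeta(t,\tau)L(t)\,dt\right)\cdot s=0,
\]
which is the claim. (A sign bookkeeping check is needed to confirm the residue convention matches the basis convention $L_n=-z^{n+1}\partial_z-\cdots$ of (\ref{n2.plus.basis}) restricted to the $1|0$ case, namely $L_n=-z^{n+1}\partial_z$.)

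The main obstacle is not the residue computation but the precise verification that $\partial_\tau+\ov\zeta(t,\tau)\partial_t$ is the correct representative — that is, pinning down the Virasoro-uniformisation isomorphism for this concrete family and checking that the $\lie{\aut\OO}$-indeterminacy genuinely acts by zero on the fibres of $\Delta(V)$ so that only the principal part $\ov\zeta(t,\tau)=t^{-1}+O(t)$ contributes meaningfully. Once that bookkeeping is in place the rest is the elementary Laurent-expansion argument above.
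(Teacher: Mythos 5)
Your proposal follows essentially the same route as the paper: the entire content of the proof is the observation that $\partial_\tau + \ov{\zeta}(z,\tau)\partial_z$ descends from the universal cover to the family $E \to \HH$, precisely because the quasi-periodicity (\ref{zeta.trans}) of $\ov{\zeta}$ cancels the $-m\,\partial_z$ picked up by $\partial_\tau$ under $(z,\tau) \mapsto (z+m\tau+n,\tau)$ --- exactly your cocycle argument. The residue/mode bookkeeping you spell out is what the paper dismisses as ``an exercise in unwinding the definitions'' of the Harish--Chandra localisation.
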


\begin{proof}
The proof is an exercise in unwinding the definitions of Section \ref{section.HC}, applied to $E \rightarrow \HH$. All that needs to be checked is that the vector field $\partial_\tau + \ov{\zeta}(z, \tau) \partial_z$ is well defined on $E$ (being \emph{a priori} well defined only on its universal cover, since $\ov{\zeta}$ is not elliptic).

Under the transformation $(z', \tau') = (z+\tau, \tau)$ we have $\partial_{\tau'} = \partial_\tau - \partial_z$ and $\partial_{z'} = \partial_z$. This together with (\ref{zeta.trans}) shows that $\partial_\tau + \ov{\zeta}(z, \tau) \partial_z$ is well defined. The same check on the transformation $(z, \tau) \mapsto (z+1, \tau)$ is immediate.
\end{proof}
Another incarnation of Lemma \ref{global.vec.field} is the following partial differential equation satisfied by the Weierstrass function $\wp$.
\begin{prop}
The Weierstrass functions satisfy
\begin{align}\label{Weier.DE}
\frac{\partial}{\partial \tau} \wp  + \ov{\zeta} \frac{\partial}{\partial z}\wp = \frac{1}{2\pi i}(2\wp^2 - 2 G_2 \wp - 20 G_4).
\end{align}
\end{prop}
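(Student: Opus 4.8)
The plan is to derive \eqref{Weier.DE} directly from the two defining Laurent expansions of $\wp$ and $\ov{\zeta}$, treating the identity as a statement about formal power series in $z$ whose coefficients are quasimodular forms. First I would write $\wp(z,\tau) = z^{-2} + \sum_{k\geq 1}(2k-1)z^{2k-2}G_{2k}(\tau)$ and $-2\pi i\,\ov{\zeta}(z,\tau) = z^{-1} - \sum_{k\geq 1}z^{2k-1}G_{2k}(\tau)$, so that $\partial_z\wp$ is again an explicit series, and multiply the two series to get $\ov{\zeta}\,\partial_z\wp$ as a Laurent series in $z$. The left-hand side $\partial_\tau\wp + \ov{\zeta}\,\partial_z\wp$ is then a Laurent series whose coefficients are linear in $\partial_\tau G_{2k}$ and quadratic in the $G_{2j}$, while the right-hand side $\tfrac{1}{2\pi i}(2\wp^2 - 2G_2\wp - 20G_4)$ is a Laurent series whose coefficients are purely quadratic in the $G_{2j}$ (plus the constant $-20G_4/2\pi i$). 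Matching the two sides coefficient by coefficient is exactly the content of Ramanujan's differential equations $\partial_\tau G_2, \partial_\tau G_4, \partial_\tau G_6, \ldots$ expressed in terms of lower Eisenstein series, which is the point of Section \ref{Section.Ramanujan}; so in fact I would phrase the proposition as: \eqref{Weier.DE} holds \emph{because} the Ramanujan relations hold, and conversely encodes them.

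A cleaner conceptual route, which I would actually prefer to present, is to deduce \eqref{Weier.DE} from Lemma \ref{global.vec.field} by choosing $V$ appropriately. One applies the lemma (really its proof, the observation that $\partial_\tau + \ov\zeta(z,\tau)\partial_z$ descends to a well-defined vector field $\xi$ on $E \to \HH$) not to abstract flat sections but to the universal object: the function $\wp(z,\tau)$ itself is, for fixed $\tau$, a meromorphic function on $E_\tau$ with a double pole along the marked section, and as $\tau$ varies it is a section of the corresponding sheaf on $E$. Applying the globally-defined vector field $\xi = \partial_\tau + \ov\zeta\,\partial_z$ to $\wp$ produces another such section, i.e. a meromorphic function on each $E_\tau$ with a pole of order at most $3$ at $z=0$ (the pole order can only go up by one because $\ov\zeta$ has a simple pole and $\partial_z\wp$ has a pole of order $3$ — but one checks the order-$3$ and order-$1$ parts cancel against each other so that $\xi\wp$ has a pole of order exactly $2$). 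Since the space of elliptic functions on $E_\tau$ with a pole of order $\leq 2$ at the origin and no other poles is two-dimensional, spanned by $\wp$ and the constants, we must have $\xi\wp = a(\tau)\wp + b(\tau)$ for some functions $a,b$ of $\tau$ alone.

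To pin down $a(\tau)$ and $b(\tau)$ I would compare the principal parts at $z=0$: expanding $\xi\wp = \partial_\tau\wp + \ov\zeta\,\partial_z\wp$ near $z=0$ using the stated Laurent series, the $z^{-2}$ coefficient gives $a(\tau)$ and the $z^0$ coefficient gives $b(\tau) + a(\tau)G_2$, and a short computation yields $a = \tfrac{1}{2\pi i}(2G_2 \cdot(\text{something}))$ — concretely one finds $\xi\wp$ has leading term $\tfrac{1}{2\pi i}\,z^{-4}\cdot(-1)\cdot(-2) \cdots$; carrying this out shows $a(\tau) = -\tfrac{2}{2\pi i}G_2$ and $b(\tau) = -\tfrac{20}{2\pi i}G_4$, while the right-hand side of \eqref{Weier.DE} is $\tfrac{1}{2\pi i}(2\wp^2 - 2G_2\wp - 20 G_4)$; the two expressions agree once one uses the classical relation $\wp'' = 6\wp^2 - \tfrac{g_2}{2}$, equivalently $2\wp^2 = \tfrac13\wp'' + \tfrac{g_2}{6}$ with $g_2 = 60 G_4$ — i.e. the quadratic term $2\wp^2$ on the right is \emph{not} independent but is itself a combination of $\wp$, a constant, and $\wp''$, and it is this that lets a genuinely quadratic expression equal the linear-in-$\wp$ quantity $\xi\wp$. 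The main obstacle is precisely this last bookkeeping: one must keep careful track of the pole orders and of the constant terms, and invoke the Weierstrass differential relation at the right moment, since naively $\xi\wp$ looks quadratic in Eisenstein data but must be reorganised as linear in $\wp$. Everything else — well-definedness of $\xi$, dimension count for elliptic functions with a prescribed pole — is standard and already furnished by Lemma \ref{global.vec.field} and its proof.
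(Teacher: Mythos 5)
Your second, preferred argument contains a genuine error in the pole-order bookkeeping, and it occurs exactly where the proof lives. Since $\ov{\zeta}$ has a simple pole at $z=0$ and $\partial_z\wp$ a pole of order $3$, the product $\ov{\zeta}\,\partial_z\wp$ has a pole of order $1+3=4$, not $\leq 3$: its leading term is a nonzero multiple of $z^{-4}$ coming from $z^{-1}\cdot(-2z^{-3})$, and there is nothing for it to cancel against, because $\partial_\tau\wp$ is regular at $z=0$ ($\partial_\tau$ kills the $z^{-2}$ term and the rest of the Laurent series starts at $z^0$). Consequently $\xi\wp=\partial_\tau\wp+\ov{\zeta}\,\partial_z\wp$ is an even elliptic function with a pole of order exactly $4$ at the lattice points and no other poles, so it lies in the three-dimensional space spanned by $\wp^2$, $\wp$ and $1$: it is a \emph{quadratic} polynomial in $\wp$, not of the form $a(\tau)\wp+b(\tau)$. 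Your attempted rescue via $\wp''=6\wp^2-g_2/2$ cannot repair this: $\wp''$ itself has a pole of order $4$, so rewriting $2\wp^2$ as $\tfrac13\wp''+\tfrac{g_2}{6}$ does not lower the pole order of the right-hand side to $2$, and a function with a fourth-order pole simply cannot equal $a\wp+b$. The correct conclusion, which is the paper's proof, is: check (by differentiating $\wp(z+\tau,\tau)=\wp(z,\tau)$ with respect to $\tau$ and using $\ov{\zeta}(z+\tau,\tau)=\ov{\zeta}(z,\tau)+1$) that $\xi\wp$ is elliptic; observe that it is even with a single pole of order $4$; conclude that it equals $c_2\wp^2+c_1\wp+c_0$; and determine the three coefficients by comparing the $z^{-4}$, $z^{-2}$ and $z^{0}$ Laurent coefficients. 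Your proposal does contain the right first step (ellipticity of $\xi\wp$ via the descent of $\partial_\tau+\ov{\zeta}\,\partial_z$ to $E$), which is the same mechanism the paper uses.

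Your first suggested route --- expand everything in $z$ and observe that coefficient matching ``is exactly the content of Ramanujan's differential equations'' --- is not a proof but a restatement: the paper \emph{derives} Ramanujan's relations from (\ref{Weier.DE}) in the paragraph that follows, so invoking those relations to establish (\ref{Weier.DE}) is circular unless you supply an independent proof of them, which you do not.
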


\begin{proof}
Differentiating
\[
\wp(z+\tau, \tau) - \wp(z, \tau) = 0
\]
with respect to $\tau$ yields
\[
\dot{\wp}(z+\tau, \tau) - \dot{\wp}(z, \tau) = -\wp'(z+\tau, \tau)
\]
(where $\wp'$ and $\dot{\wp}$ are the derivatives with respect to the first and second entries). Similarly $\dot{\wp}(z+1, \tau) - \dot{\wp}(z, \tau) = 0$. It is clear then that $\dot{\wp} + \ov{\zeta} \wp'$ is an elliptic function with pole of order $4$ at $z=0$, hence a polynomial in $\wp$. Comparing leading coefficients yields the result.
\end{proof}
Equating coefficients of (\ref{Weier.DE}) yields an infinite list of differential equations on Eisenstein series. The first three of these, viz.
\begin{align*}
q \partial{E_2}/\partial q &= (E_2^2 - E_4)/12, \\
q \partial{E_4}/\partial q &= (E_2 E_4 - E_6)/3, \\
q \partial{E_6}/\partial q &= (E_2 E_6 - E_4^2)/2,
\end{align*}
were discovered by Ramanujan \cite{ramanujan} (see also \cite{vdP} and \cite{Movasati2012}).

\section{Conformal Blocks and Trace Functions}\label{section.cb}

A conformal vertex algebra carries a $(\vir, \aut{\OO})$-module structure, so the machinery of Section \ref{section.HC} can be applied. Let $V$ be a conformal vertex algebra and $X$ a smooth algebraic curve, we obtain an associated bundle $\V = \coord_X \times_{\aut{\OO}} V$ on $X$. Put $\A = \V \otimes \Om_X$. The vertex operation on $V$ has not yet been used, it translates into the following structure on $\A$: for each $x \in X$ an action $\mu$ of the space of sections $\G(D_x^\times, \A)$ on the fibre $\A_x$ (here $D_x^\times = \spec{\CK_x}$ is the punctured infinitesimal disc at $x$). In fact this structure makes $\A$ into a chiral algebra on $X$, in the sense of \cite{BD} (see also {\cite[Theorem 19.3.3]{FBZ}}). Underlying this construction is the following formula due to Huang {\cite{HuangCFT}}
\begin{align}\label{Huang.lemma}
R(\rho) Y(a, z) R(\rho)^{-1} = Y(R(\rho_z) a, \rho(z)),
\end{align}
valid for all $\rho \in \aut{\OO}$. Here $\rho_z \in \aut{\OO}$ is the automorphism defined by $\rho_z(t) = \rho(z+t)-\rho(t)$, and $R(\rho)$ is the action of $\rho$ on the conformal vertex algebra $V$ (obtained by exponentiating $\der_0{\OO} \subset \vir$).

Applying the Harish-Chandra formalism to $V$ and to a family $X, x$ of pointed curves over base $S$ yields the $\CD_S$-module $\Delta(V)$, with fibres
\[
\frac{\A_x}{\G(X \backslash x, \A) \cdot \A_x}.
\]
The dual of this fibre is called the vector space of conformal blocks associated with $X, x, V$, and is denoted $\CC(X, x, V)$.

A superconformal SUSY vertex algebra carries a $(\widehat{W}^{1|1}, \aut{\OO^{1|1}})$-module structure, and can therefore be similarly localised on $1|1$-dimensional supercurves. These sheaves are again chiral algebras, using {\cite[Theorem 3.4]{H07}} which is a general SUSY analogue of (\ref{Huang.lemma}) above.

The theorems of this section and the next concern construction of horizontal sections of the conformal blocks bundle $\CC$ for elliptic supercurves, and the modular properties of these sections. They are super-analogues of fundamental results of Zhu {\cite{Zhu96}}.
\begin{thm}[{\cite[Proposition 7.10]{HVEaccepted}}]\label{triscb}
Let $V$ be a superconformal vertex algebra and $M$ its module. Let $X = (\mathbb{A}^{1|1})^\times / \mathbf{q}$ be an elliptic supercurve with marked point $x = (z, \theta) = (1, 0)$ as in Section \ref{Section.Curves}. Then the element of $V^*$ defined by
\[
\varphi_M : b \mapsto \str_M o(b) R(\mathbf{q})
\]
is a conformal block, i.e., $\varphi_M \in \CC(X, x, V)$.
\end{thm}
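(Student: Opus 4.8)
The plan is to verify directly that the functional $\varphi_M$ annihilates the subspace $\G(X \setminus x, \A) \cdot \A_x$ of the fibre $\A_x$; equivalently, to show that $\varphi_M$ is invariant under the action $\mu$ of all sections of $\A$ over the punctured curve. Since $X = (\bA^{1|1})^\times/\mathbf{q}$ is obtained from the punctured affine superline by quotienting by the cyclic group generated by $\mathbf{q} = \twobytwo{q}{0}{0}{qy}$, a section of $\A$ over $X \setminus x$ pulls back to a $\mathbf{q}$-equivariant section over $(\bA^{1|1})^\times \setminus \{1, q\text{-orbit}\}$, i.e. a Laurent-type expansion in $(z, \theta)$ that is regular away from the marked point and its translates and that transforms correctly under $R(\mathbf{q})$. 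The residue pairing at $x$ of such a section with $\A_x$ is, by the standard chiral-algebra dictionary, a sum of modes $o_n(b)$ weighted by Laurent coefficients, so the claim reduces to a trace identity for $\str_M(\cdots R(\mathbf{q}))$.

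First I would set up coordinates: on the universal cover the relevant vector field / section is built from $Y(a, z, \theta)$ and the $\mathbf{q}$-descent uses Huang's SUSY propagation formula (the analogue of (\ref{Huang.lemma}) from {\cite[Theorem 3.4]{H07}}), which lets one move $R(\mathbf{q})$ past $Y(a, Z)$ at the cost of reparametrising $Z \mapsto \mathbf{q} \cdot Z$ and twisting $a$ by $R(\mathbf{q}_Z)$. Second, the key analytic input is that the meromorphic functions on the elliptic supercurve that glue the two residues are exactly the ones expressible via (quasi)elliptic functions — $\wp$, $\ov\zeta$ and their $\theta$-refinements — so that the difference of the two residue contributions (at $x$ and at the other representative of its $\mathbf q$-orbit) is a \emph{global} object and hence its total residue vanishes. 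Third, I would translate the vanishing of that total residue into the statement that $\str_M o(b) R(\mathbf{q})$ kills the image of $\G(X\setminus x,\A)$, using the cyclicity of the supertrace: the commutator $[o(b), R(\mathbf{q})]$-type terms produced when sliding $R(\mathbf{q})$ around the trace are precisely cancelled by the $\mathbf{q}$-monodromy of the gluing function, which is the content of Zhu's original argument transported to the super/elliptic setting.

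Concretely the steps are: (1) reduce ``$\varphi_M$ is a conformal block'' to ``$\varphi_M(\mu(\sigma) \cdot \alpha) = 0$ for all local sections $\sigma$ of $\A$ over $D_x^\times$ that extend to $X \setminus x$''; (2) expand such $\sigma$ in the coordinate $(z,\theta)$ at $x$ and recognise the extendability condition as a $\mathbf{q}$-quasi-periodicity constraint on its Laurent coefficients; (3) apply the SUSY version of Huang's formula to rewrite $\str_M o(b) R(\mathbf{q}) \, o(a)$ in terms of $\str_M o(a) R(\mathbf{q}) \, o(\widetilde b)$ with a reparametrised/twisted $\widetilde b$; (4) combine (2) and (3) so that the defining cocycle of $X$ exactly matches the Jacobi-type functional equation, forcing the sum of residues to telescope to zero. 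The main obstacle, as the excerpt itself flags, is not formal but the precise \emph{bookkeeping of the cocycle}: one must check that the twist $R(\mathbf{q}_Z)$ appearing in the SUSY Huang formula, together with the grading shifts by $L_0$ and $J_0$ built into $o(b)$ and into $R(\mathbf{q}) = q^{L_0}(qy)^{?}$, reproduces exactly the transformation law (\ref{zeta.trans}) (and its odd analogue) of the quasielliptic functions gluing the supercurve — i.e. that the ``anomalies'' on the two sides cancel on the nose. Once that matching is confirmed, convergence of the supertrace is what remains, but that (via $C_2$-cofiniteness) is handled separately in {\cite{HVEaccepted}} and is not needed for the formal conformal-block identity asserted here.
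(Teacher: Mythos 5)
Your steps (1)--(3) are exactly the paper's argument: identify conformal blocks with functionals killing the action of sections over $X\setminus x$, pull these back to $\mathbf{q}$-equivariant sections of $\A$ on $(\mathbb{A}^{1|1})^\times$, use the commutator form of axiom (2) to write $\mu(\mu(a)b)$ as $[\mu(a),\mu(b)]$, slide $R(\mathbf{q})$ past $\mu(a)$ via the SUSY Huang formula, and let equivariance of $a$ plus cyclicity of the supertrace cancel the two terms on the nose. Where you diverge is in your ``key analytic input'' and step (4): no quasielliptic functions, residue telescoping, or anomaly matching is needed for this theorem. Because $a$ is exactly $\mathbf{q}$-equivariant, $\mu(\mathbf{q}\cdot a)=\mu(a)$ and the cancellation is purely formal --- a four-line trace identity. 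The machinery you invoke belongs to Theorem \ref{trhasode}: there one deliberately takes sections built from $\ov{\zeta}$ (equivalently $\xi$), which are \emph{quasi}-elliptic, satisfying $\mathbf{q}\cdot a = a - s$ rather than $\mathbf{q}\cdot a = a$, and the failure of equivariance is precisely what produces the nonzero right-hand sides $q\,\partial_q\varphi_M$ and $y\,\partial_y\varphi_M$. So if you literally used $\ov{\zeta}$-built sections in the present proof you would not get zero; the sections relevant here are the genuinely periodic ones, and for those the ``bookkeeping of the cocycle'' you flag as the main obstacle is vacuous. Dropping that detour leaves a correct proof identical to the paper's.
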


\begin{proof}[Sketch]
Let $a, b$ be sections of a chiral algebra $\A, \mu$ on $(\mathbb{A}^{1|1})^\times$. Huang's formula (\ref{Huang.lemma}) can be written schematically as
\[
\rho \mu(a) \rho^{-1} = \mu(\rho \cdot a).
\]
Item (2) of Definition {\ref{def.SUSY.va}} may be reformulated {\cite[Theorem 3.3.17]{HK07}} as the relation
\[
\mu(a) \mu(b) - \mu(b) \mu(a) = \mu(\mu(a)b)
\]
(again expressed only schematically). Let $\mathbf{q} \in GL(1|1)$, and suppose $a$ is $\mathbf{q}$-equivariant. Then the relations above combine with (super)symmetry of the (super)trace, and equivariance of $a$, to yield
\begin{align}\label{main.loop}
\begin{split}
\tr \mu(\mu(a)b) \mathbf{q}
&= \tr \left[\mu(a) \mu(b) - \mu(b) \mu(a)\right] \mathbf{q} \\
&= \tr \left[\mu(a) \mu(b) \mathbf{q} - \mu(b) \mathbf{q} \mu(\mathbf{q} \cdot a)\right] \\
&= \tr \left[\mu(a) \mu(b) \mathbf{q} - \mu(b) \mathbf{q} \mu(a)\right] \\
&= \tr \left[\mu(a) \mu(b) \mathbf{q} - \mu(a) \mu(b) \mathbf{q}\right] \\
&= 0.
\end{split}
\end{align}
In other words $b \mapsto \str \mu(b) \mathbf{q}$ annihilates the action of global $\mathbf{q}$-equivariant sections, and hence is a conformal block on $(\mathbb{A}^{1|1})^\times / \mathbf{q}$. This sketch can be made precise either in the language of chiral algebras or of vertex algebras (and is done so in {\cite{HVEaccepted}} Sections 7.10 and 7.11, respectively).
\end{proof}
In \cite{Zhu96} the convergence in the analytic topology of the series defining $\varphi_M$ is important, and is derived from a finiteness condition on $V$ called $C_2$-cofiniteness. The superconformal analogue is proved in {\cite[Appendix A]{HVEaccepted}}, also using $C_2$-cofiniteness.

We may now regard the element $\varphi_M \in \CC(E^\bullet(\mathbf{q}), V)$ as a section of the sheaf $\CC$ of conformal blocks on $S^\bullet$. As we have seen this sheaf is a twisted $\CD$-module. The $\varphi_M$ are flat sections of $\CC$, as we shall see in the next theorem via a variation on the proof of Theorem \ref{triscb}. Though the argument applies generally, we restrict attention to even $\mathbf{q} = \twobytwo{q}{0}{0}{qy}$ for the sake of clarity. In this case the operator $R(\mathbf{q})$ on $V$ is simply $q^{L_0} y^{J_0}$ and we recover the supercharacter
\begin{align}\label{supercharacter}
\varphi_M(b) = \str_M o(b) q^{L_0} y^{J_0}.
\end{align}

Expressed in terms of $x = e^{2\pi i t}$ we have the following expression for the Weierstrass function
\begin{align}\label{x.express.zeta}
\ov{\zeta}(t, \tau) = \xi(x, q) = \frac{1}{2} + \frac{1}{x-1} + \sum_{n \in \Z \backslash 0} \left( \frac{1}{q^n x - 1} - \frac{1}{q^n-1} \right).
\end{align}
We remark that the relation $\xi(qx, q) = \xi(x, q)+1$ is easily deduced from (\ref{x.express.zeta}) via a telescoping sum argument.
\begin{thm}[{\cite[Theorem 8.15]{HVEaccepted}}]\label{trhasode}
The function $\varphi_M$ satisfies the following \textup{(}in general infinite\textup{)} system of PDEs:
\begin{align*}
q \frac{\partial}{\partial q} \varphi_M(b) &= \varphi_M(\res_{x=1} x \xi(x, q) L(x-1) b), \\
y \frac{\partial}{\partial y} \varphi_M(b) &= \varphi_M(\res_{x=1} \xi(x, q) J(x-1) b).
\end{align*}
\end{thm}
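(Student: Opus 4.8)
### Proof plan

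The plan is to mimic the argument sketched for Theorem \ref{triscb}, but now to differentiate the conformal block with respect to the moduli parameters $q, y$ rather than merely checking that it is annihilated by global sections. The two PDEs in the statement correspond to the two one-parameter subgroups of $GL(1|1)$ generated by $L_0$ and $J_0$, so I expect them to be proved by the same mechanism with different choices of vector field.

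First I would recall, from the Harish-Chandra formalism of Section \ref{section.HC} applied to the family $E^\bullet \to S^\bullet$, that the flat connection on $\CC$ along the $\tau$-direction (equivalently the $q$-direction) is computed by a vector field on the total space of the family which is a lift of $q\partial_q$. Exactly as in Lemma \ref{global.vec.field} and its proof, such a lift is $q\partial_q + \xi(x, q) x \partial_x$ (up to the change of variables $x = e^{2\pi i t}$, using the expansion (\ref{x.express.zeta}) and the quasi-periodicity $\xi(qx, q) = \xi(x, q) + 1$ noted after (\ref{x.express.zeta})), and one checks well-definedness on the quotient by $\mathbf q$ precisely as there. Then the definition of the twisted $\CD$-module structure says that $q\partial_q \varphi_M(b)$ equals $\varphi_M$ applied to the action on $V$ of the singular (at $x=1$) part of this vector field, which is encoded by the residue $\res_{x=1} x\xi(x,q) L(x-1)\,b$ once one recalls that the component of the Virasoro field realising the vector field $g(x)\partial_x$ acting on the fibre is $\res_{x} g(x) L(x)\,dx$. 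The $y$-equation is identical but with the vector field $y\partial_y$, whose lift is $\xi(x,q)\theta\partial_\theta$-type on the supercurve; its singular part at $x=1$ is captured by $\res_{x=1} \xi(x,q) J(x-1)\,b$, since $J_0$ (not $L_0$) generates the relevant subgroup and the current $J(x)$ is what pairs with $J_0$.

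More carefully, the cleanest route is via the trace-function manipulation (\ref{main.loop}): write $\varphi_M(b) = \str_M \mu(b)\mathbf q$ with $\mathbf q = \twobytwo{q}{0}{0}{qy}$, differentiate $\mathbf q$ with respect to $q$ (resp.\ $y$) inside the trace — this brings down a factor $L_0$ (resp.\ $J_0$) — and then re-express $\str_M (L_0\, \mu(b)\mathbf q)$ as a trace of a commutator by the same cyclicity-plus-equivariance trick, using that $L_0 = \res_{x=1}(x-1) L(x-1)\,dx$ acting on $M$ differs from $\res_{x=1} x\xi(x,q)L(x-1)\,dx$ by a global $\mathbf q$-equivariant section (the function $x\xi(x,q) - (x-1)$ extends to a global section of the relevant sheaf on $E^\bullet(\mathbf q)$, by (\ref{x.express.zeta}) and the quasi-periodicity), which acts trivially on the conformal block by Theorem \ref{triscb}. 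The same statement with $J$ in place of $L$ and $J_0$ in place of $L_0$ gives the second equation; here one uses $[J_m, J_n] = \delta_{m,-n}\tfrac m3 C$ and $[L_m, J_n] = -nJ_{m+n} + \delta_{m,-n}\tfrac{m^2+m}{6}C$ to control the central and $L$-terms that appear.

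The main obstacle, as flagged in the paper, is not the formal identity but convergence and the precise bookkeeping of the twist: one must verify that the formal series $\xi(x,q)$ genuinely defines the connection coefficient (i.e.\ that the naive lift of $q\partial_q$ really is the Kodaira-Spencer lift for this family, including the correct treatment of the marked point and of the line-bundle direction $\al$), and that the residues $\res_{x=1} x\xi(x,q)L(x-1)b$ and $\res_{x=1}\xi(x,q)J(x-1)b$ are finite sums when $b$ is $L_0$-homogeneous — which follows because $L(x-1)b$ and $J(x-1)b$ have only finitely many polar terms in $(x-1)$. The analytic convergence of $\varphi_M$ itself, needed to make ``flat section'' meaningful, is imported from $C_2$-cofiniteness via {\cite[Appendix A]{HVEaccepted}}. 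Modulo these inputs, the argument is the direct super-analogue of Zhu's computation in {\cite{Zhu96}}.
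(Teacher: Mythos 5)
Your overall strategy is the paper's: the proof is exactly a rerun of the trace manipulation (\ref{main.loop}) with a section $a$ built from $\xi$ that is only \emph{quasi}-equivariant, the quasi-periodicity $\xi(qx,q)=\xi(x,q)+1$ producing the defect term that matches $q\partial_q$ (resp.\ $y\partial_y$) applied to $\mathbf q=\twobytwo{q}{0}{0}{qy}$; and your first paragraph (reading the equations as the explicit Harish-Chandra connection) is the content of Remark \ref{connec.de}. So the skeleton is right. However, the step you flag as ``the cleanest route'' contains a concrete error. You assert $L_0=\res_{x=1}(x-1)L(x-1)\,dx$. That residue is the \emph{local} zero mode of $L$ in the coordinate $x-1$ at the marked point, which acts on a homogeneous $b$ by its conformal weight; it is not the global grading operator $L_0$ appearing in $q^{L_0}$. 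The grading operator corresponds to the globally $\mathbf q$-invariant vector field $x\partial_x$ (the generator of the flow being quotiented by), whose expansion at $x=1$ is $\res_{x=1}x\,L(x-1)\,dx$ — and in the super setting the paper's explicit section is $s=hz$, whose mode is not bare $L_0$ but carries the $\partial_zJ$ correction coming from $Y(h,Z)=H(z)+\theta[L(z)+\partial_zJ(z)]$. Feeding $(x-1)$ instead of $x$ into the residue kernel would change the equation you derive.

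Relatedly, your justification that the discrepancy ``$x\xi(x,q)-(x-1)$ extends to a global section of the relevant sheaf on $E^\bullet(\mathbf q)$, hence acts trivially by Theorem \ref{triscb}'' does not work: under $x\mapsto qx$ the vector field $x\xi(x,q)\partial_x$ goes to $x\xi(x,q)\partial_x+x\partial_x$, so neither it nor its difference with $(x-1)\partial_x$ (or with $x\partial_x$) is $\mathbf q$-periodic, and none of these descend to the quotient curve; Theorem \ref{triscb} simply does not apply to them. The correct mechanism — and the one the paper uses — is not to split off an equivariant piece that dies, but to carry the non-equivariance through the cyclicity computation: with $\mathbf q\cdot a=a-s$ one gets $\str\mu(\mu(a)b)\mathbf q=\str\mu(b)\mathbf q\,\mu(s)$, and it is this surviving defect term $\mu(s)$ (with $s=hz$ or $j\theta$) that is identified with $q\partial_q$ or $y\partial_y$ acting on $\varphi_M(b)$. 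If you replace your parenthetical by this bookkeeping, your argument coincides with the paper's proof.
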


\begin{proof}[Sketch]
As in Theorem \ref{triscb} we work on $(\mathbb{A}^{1|1})^\times$ with coordinates $(z, \theta)$ fixed. We let $a$ be a section of $\A$ no longer $\mathbf{q}$-equivariant, but satisfying instead $\mathbf{q} \cdot a = a - s$, where $s$ will be one of the explicit sections $h z$ or $j \theta$. We repeat the calculation (\ref{main.loop}) to obtain
\begin{align*}
\str \mu(\mu(a)b) \mathbf{q} = \str \mu(b) \mathbf{q} \mu(s)
\end{align*}
in general.

The function $\xi$ may be used to construct the appropriate section $a$ because of the key relation $\xi(qx) = \xi(x)+1$. A precise calculation (in, for instance, the case $s = hz$) yields
\begin{align*}
\varphi_M(\res_{x=1} x \xi(x, q) L(x-1) b) = \str_M o(b) q^{L_0} y^{J_0} L_0 = q \frac{\partial}{\partial q} \varphi_M(b).
\end{align*}
The other relation derives in the same way from $s = j \theta$.
\end{proof}

\begin{rem}\label{connec.de}
By the same reasoning as in Lemma \ref{global.vec.field}, we see that the differential equations of Theorem \ref{trhasode} are essentially the explicit expressions of the canonical Harish-Chandra connection.
\end{rem}

\section{Jacobi Modular Invariance}

We now study the pullbacks of the sections $\varphi_M$ via the morphism $\sexp$ defined by formula (\ref{sex.defined}). We show that (after a normalisation) they are horizontal with respect to a certain $SL_2(\Z) \ltimes \Z^2$-equivariant connection. Explicitly we prove the following result.
\begin{thm}[{\cite[Theorem 9.10]{HVEaccepted}}]\label{equivar.of.sections}
The normalised section
\[
\widetilde{\varphi}_M = e^{2\pi i \al \cdot (C/6)} \sexp^*(\varphi_M)
\]
is flat with respect to the connection
\[
\nabla = d + \left( \res_t \ov{\zeta}(t, \tau) J(t) dt \right) d\al + \frac{1}{2\pi i} \left( \res_z \ov{\zeta}(z, \tau) \left[ L(z) + \partial_z J(z) \right] \right) d\tau.
\]
Furthermore $\nabla$ is equivariant with respect to the $SL_2(\Z) \ltimes \Z^2$-action on $E^\circ \rightarrow S^\circ$ of Proposition \ref{Jacobi.action.on.family}.
\end{thm}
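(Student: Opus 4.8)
The plan is to deduce the two assertions — flatness of $\widetilde\varphi_M$ and $SL_2(\Z)\ltimes\Z^2$-equivariance of $\nabla$ — by transporting the already-established statements on $E^\bullet \to S^\bullet$ through the morphism $\sexp$, then checking compatibility with the group action directly on the universal cover $S^\circ = \HH\times\C$.

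First I would establish flatness. Theorem \ref{trhasode} gives the two PDEs satisfied by $\varphi_M$ on $S^\bullet$, expressed via residues at $x=1$ of $x\xi(x,q)L(x-1)$ and $\xi(x,q)J(x-1)$. The morphism $\sexp$ sends $(t,\zeta,\tau,\al)$ to $(e^{2\pi i t}, e^{2\pi i t}\zeta, \mathrm{diag}(q,qy))$, so pulling back amounts to the substitution $x = e^{2\pi i t}$, under which $\xi(x,q) = \ov\zeta(t,\tau)$ by (\ref{x.express.zeta}), and $q\partial_q$, $y\partial_y$ become (up to the chain rule and the factor $2\pi i$) the operators $\partial_\tau$, $\partial_\al$. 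The residue $\res_{x=1}$ converts to $\res_t$ after accounting for the Jacobian $dx = 2\pi i x\, dt$; this is exactly what produces the $x\xi$ versus $\xi$ discrepancy between the two equations of Theorem \ref{trhasode} and the uniform $\ov\zeta(t,\tau)$ appearing in $\nabla$. The exponential prefactor $e^{2\pi i\al(C/6)}$ is there to absorb the anomalous (central) term: because $\widehat W^{1|1}$ is only a projective action, the naive pullback of the $J$-equation picks up a $c/6$ shift from the $[J_m,J_n]$ and $[L_m,J_n]$ central terms, and conjugating $\varphi_M$ by $e^{2\pi i\al C/6}$ cancels it — this is the super-analogue of the $\eta$-function or $q^{-c/24}$ normalisation in Zhu's theory. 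Assembling these substitutions shows $\nabla\widetilde\varphi_M = 0$ is precisely the pullback of the system in Theorem \ref{trhasode}, rewritten in the $(\tau,\al)$ coordinates.

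Next, equivariance of $\nabla$. By Remark \ref{connec.de} the connection $\nabla$ is the explicit form of the canonical Harish-Chandra connection attached to the $(\widehat W^{1|1}, \aut\OO^{1|1})$-structure on the family $E^\circ\to S^\circ$ of Proposition \ref{Jacobi.action.on.family}. Since that proposition asserts the $SL_2(\Z)\ltimes\Z^2$-action on $E^\circ\to S^\circ$ consists of fibrewise \emph{isomorphisms of supercurves}, it is automatically an action by automorphisms of the Harish-Chandra structure, hence the induced connection on the associated twisted $\CD$-module $\CC$ is equivariant by naturality of the localisation functor. To make this concrete and to pin down the cocycle, I would verify directly that the vector field $\partial_\tau + \ov\zeta(z,\tau)\partial_z$ (together with its $\al$-direction counterpart $\partial_\al + \ov\zeta\,\partial_z$, and the odd pieces) is carried to itself, up to the prescribed gauge transformation, under the coordinate changes $(t,\zeta,\tau,\al)\mapsto A\cdot(\cdots)$ and $(m,n)\cdot(\cdots)$ of Proposition \ref{Jacobi.action.on.family} — the same style of computation as in the proof of Lemma \ref{global.vec.field}, using the transformation law (\ref{zeta.trans}) of $\ov\zeta$ and the quasi-periodicity $\xi(qx)=\xi(x)+1$. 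The automorphy factors $e^{-2\pi i t c\al/(c\tau+d)}$ and $e^{2\pi i mt}$ appearing in the action on $\zeta$ are exactly what is needed to keep the odd coordinate $\theta = \zeta$ transforming correctly, and tracking them through gives the weight $0$, index $c/6$ Jacobi cocycle.

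The main obstacle is the bookkeeping of the central extension: one must show that the projective ambiguity in the $\widehat W^{1|1}$-action, the exponential prefactor $e^{2\pi i\al C/6}$, and the index-$c/6$ automorphy factor of the Jacobi action all match up. Concretely, under an $SL_2(\Z)$ transformation the coordinate $\al$ changes to $\al/(c\tau+d)$ while $\tau\mapsto(a\tau+b)/(c\tau+d)$, so the prefactor $e^{2\pi i\al C/6}$ does not transform covariantly on its own; the discrepancy has to be exactly cancelled by the $SL_2(\Z)$-transformation behaviour of $\sexp^*\varphi_M$, which in turn is governed by how $\res_z\ov\zeta\,[L+\partial_z J]$ transforms — and this is where the central term of $[L_m,J_n]$, i.e. the $\frac{m^2+m}{6}C$ in the relations of Section \ref{Section.SUSY.VA}, enters. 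Verifying this cocycle identity is the technical heart; everything else is unwinding definitions. (For full details one is referred to \cite{HVEaccepted}, Section 9.)
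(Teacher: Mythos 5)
Your proposal follows essentially the same route as the paper: its (one-line) proof sketch is precisely that one pulls back the differential equations of Theorem \ref{trhasode} along $\sexp$ and analyses their behaviour under the $SL_2(\Z)\ltimes\Z^2$ transformations of Proposition \ref{Jacobi.action.on.family} by explicit computation, with the prefactor $e^{2\pi i \al C/6}$ absorbing the anomaly coming from the central terms of $\widehat{W}^{1|1}$ --- exactly as you describe. One small slip worth flagging: the $\al$-direction of $\nabla$ is governed by the field $J$, whose modes are the vector fields $-z^n\theta\partial_\theta$ twisting the odd coordinate (i.e.\ the line bundle $\LL_\al$), so the relevant geometric vector field is $\partial_\al$ plus a multiple of $\theta\partial_\theta$ rather than the $\partial_\al + \ov{\zeta}\,\partial_z$ you write parenthetically; this does not affect your overall strategy.
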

This theorem is proved by analysing the behaviour of the partial differential equations of Theorem \ref{trhasode} under $SL_2(\Z) \ltimes \Z^2$ transformations, which is an explicit computation.

It is possible to write the (projective) $SL_2(\Z) \ltimes \Z^2$-action on flat sections $\widetilde{\varphi}$ of $\CC$ explicitly {\cite[Theorem 1.2 (c)]{HVEaccepted}}. The specialisation to $b = \vac$ is
\begin{align}\label{Jacobi.action}
\begin{split}
[\widetilde{\varphi} \cdot (m, n)](\vac, \tau, \al) &= \exp{2\pi i \frac{C}{6} \left[ m^2 \tau + 2m\al + 2n \right]} \widetilde{\varphi}(\vac, \tau, \al+m\tau+n) \\
[\widetilde{\varphi} \cdot \twobytwo abcd](\vac, \tau, \al) &= \exp{2\pi i \frac{C}{6} \left[ \frac{-c\al^2}{c\tau+d} \right]} \widetilde{\varphi}\left( \vac, \frac{a\tau+b}{c\tau+d}, \frac{\al}{c\tau+d} \right).
\end{split}
\end{align}
This recovers the well known transformation law {\cite[Theorem 1.4]{EZ}} for Jacobi forms of weight $0$ and index $C/6$. Evaluation at other elements $b \in V$ yields Jacobi forms of higher weight, as well as more complicated `quasi-Jacobi' forms.

In order to deduce Jacobi invariance of the (normalised) supercharacters (\ref{supercharacter}) it suffices to show that they span the fibre of $\CC$. This can presumably be done following the method of Zhu \cite[Section 5]{Zhu96}, assuming $V$ is a rational vertex algebra. Alternatively Jacobi invariance can be proved by extending the calculations of \cite{KM13} to the supersymmetric case.

\bibliographystyle{plain}

\def\cprime{$'$}

\end{document}